\newcommand{\toplabel}{9.4}
\newcommand{\bottomlabel}{-0.8}
\newcommand{\leftlabel}{-1}
\newcommand{\rightlabel}{9.1}
\newcommand{\End}{\mathrm{End}}
\newcommand{\Fr}{\mathrm{Fr}}
\newcommand{\Kl}{\mathrm{Kl}}
\newcommand{\Sp}{\mathrm{Sp}}
\newcommand{\SL}{\mathrm{SL}}
\newcommand{\GL}{\mathrm{GL}}
\newcommand{\SO}{\mathrm{SO}}
\newcommand{\rO}{\mathrm{O}}
\newcommand{\Res}{\mathrm{Res}}
\newcommand{\tr}{\mathrm{tr}}
\newcommand{\swan}{\mathrm{swan}}
\newcommand{\C}{\mathbf{C}}
\newcommand{\F}{\mathbf{F}}
\newcommand{\bone}{\mathbf{1}}
\newcommand{\cA}{\mathcal{A}}
\newcommand{\cK}{\mathcal{K}}
\newcommand{\cL}{\mathcal{L}}
\newcommand{\cX}{\mathcal{X}}
\newcommand{\Xbar}{\bar{\cX}}
\newcommand{\Klb}{\overline{\Kl}}
\newcommand{\Fqb}{\overline{\F_q}}
\newcommand{\Gm}{\mathbf{G}_{\mathrm{m}}}
\newcommand{\Gms}[1]{\mathbf{G}_{\mathrm{m},#1}}
\theoremstyle{definition}
\newtheorem{definition}[subsection]{Definition}
\newtheorem{remark}[subsection]{Remark}
\theoremstyle{plain}
\newtheorem{lemma}[subsection]{Lemma}
\newtheorem{theorem}[subsection]{Theorem}
\newtheorem{cor}[subsection]{Corollary}
\begin{document}
\numberwithin{equation}{section}
\title{On the distribution of Jacobi sums}
\author{Qing Lu\thanks{School of Mathematical Sciences, University of Chinese Academy of
Sciences, Beijing 100049, China; Academy of Mathematics and Systems Science,
Chinese Academy of Sciences, Beijing 100190, China; email:
\texttt{qlu@ucas.ac.cn}. Partially supported by National Natural Science
Foundation of China Grant 11371043.}\and Weizhe Zheng\thanks{Morningside
Center of Mathematics, Academy of Mathematics and Systems Science, Chinese
Academy of Sciences, Beijing 100190, China; email:
\texttt{wzheng@math.ac.cn}. Partially supported by China's Recruitment
Program of Global Experts; National Natural Science Foundation of China
Grant 11321101; Hua Loo-Keng Key Laboratory of Mathematics and National
Center for Mathematics and Interdisciplinary Sciences, Chinese Academy of
Sciences.}\and Zhiyong Zheng\thanks{School of Mathematics and Systems
Science, Beihang University, Beijing 100191, China; email:
\texttt{zhengzhiyong@buaa.edu.cn}. Partially supported by Program 863 Grant
2013AA013702; Program 973 Grant 2013CB834205.} \thanks{Mathematics Subject
Classification 2010: 11T24 (Primary); 11K38, 14G15, 20G05 (Secondary).}}
\date{}
\maketitle

\begin{abstract}
Let $\F_q$ be a finite field of $q$ elements. For multiplicative
characters $\chi_1,\dots, \chi_m$ of $\F_q^\times$, we let
$J(\chi_1,\dots, \chi_m)$ denote the Jacobi sum. Nicholas Katz and Zhiyong
Zheng showed that for $m=2$, the normalized Jacobi sum
$q^{-1/2}J(\chi_1,\chi_2)$ ($\chi_1\chi_2$ nontrivial) is asymptotically
equidistributed on the unit circle as $q\to \infty$, when $\chi_1$ and
$\chi_2$ run through all nontrivial multiplicative characters of
$\F_q^\times$. In this paper, we show a similar property for $m\ge 2$.
More generally, we show that the normalized Jacobi sum
$q^{-(m-1)/2}J(\chi_1,\dots,\chi_m)$ ($\chi_1\dotsm \chi_m$ nontrivial) is
asymptotically equidistributed on the unit circle, when $\chi_1,\dots,
\chi_m$ run through arbitrary sets of nontrivial multiplicative characters
of $\F_q^\times$ with two of the sets being sufficiently large. The case
$m=2$ answers a question of Shparlinski.
\end{abstract}

\section{Introduction}
Let $\F_q$ be a finite field of characteristic $p$ with $q$ elements, and
let $\C$ be the field of complex numbers. We let $\Psi$ denote the set of
nontrivial additive characters $\F_q \to \C^\times$. We let $\Xbar$ (resp.\
$\cX$) denote the set of multiplicative characters (resp.\ nontrivial
multiplicative characters) $\F_q^\times\to \C^\times$. For $\psi\in \Psi$
and $\chi\in \Xbar$, we consider the Gauss sum
\[G(\psi,\chi)=\sum_{a\in \F_q^\times} \psi(a)\chi(a).\]
For $m\ge 2$, $\chi_1,\dots,\chi_m\in \Xbar$, we consider the Jacobi sum
\[J(\chi_1,\dots,\chi_m)=\sum_{\substack{a_1,\dots,a_m\in \F_q^\times\\
a_1+\dots+a_m=1}}\chi_1(a_1)\dotsm\chi_m(a_m).
\]
It is known that for $\chi,\chi_1,\dots,\chi_m\in \cX$,
$\chi_1\dotsm\chi_m\neq \bone$, where $\bone$ denotes the trivial
multiplicative character,
\[\lvert G(\psi,\chi)\rvert =q^{1/2}, \qquad \lvert J(\chi_1,\dots,\chi_m)\rvert = q^{(m-1)/2}.\]

Nicholas Katz and Zhiyong Zheng showed in \cite[Theorem 1]{KZ} that the
normalized Gauss sums
\[\{q^{-1/2}G(\psi,\chi)\}_{\psi\in \Psi,\ \chi\in \cX}\]
and, for $m=2$, the normalized Jacobi sums
\[\{q^{-1/2}J(\chi_1,\chi_2)\}_{\chi_1,\chi_2\in \cX,\ \chi_1\chi_2\neq \bone}\]
are asymptotically equidistributed in the unit circle as $q\to \infty$.
Shparlinski showed in \cite{Shp} that the normalized Gauss sums
\[\{q^{-1/2}G(\psi,\chi)\}_{\psi\in \Phi,\ \chi\in \cA},\]
where $\psi$ and $\chi$ run through arbitrary subsets $\Phi\subseteq \Psi$
and $\cA\subseteq \cX$ satisfying $\#\Phi\#\cA\ge q^{1+\epsilon}$ for a
constant $\epsilon> 0$, are asymptotically equidistributed in the unit
circle as $q\to \infty$, and asked whether a similar property holds for
$q^{-1/2}J(\chi_1,\chi_2)$.

The goal of this paper is to study more generally equidistribution
properties of the normalized Jacobi sums
\begin{equation}\label{e.seq}
\{q^{-(m-1)/2}J(\chi_1,\dots,\chi_m)\}_{\chi_i\in \cA_i,\ \chi_1\dotsm \chi_m\neq \bone},
\end{equation}
for $m\ge 2$, where the $\chi_i$'s run through arbitrary nonempty subsets
$\cA_i\subseteq \cX$, $i=1,\dots,m$. We show that \eqref{e.seq} is
asymptotically equidistributed in the unit circle when two of the subsets
are sufficiently large in the sense that $q\ln^2 q/\max_{i\neq
j}\#\cA_i\#\cA_j\to 0$. The case $m=2$ gives an affirmative answer to
Shparlinski's question. Moreover, we give better equidistribution estimates
when some (or all) of the subsets are $\cX$. As in \cite{KZ} and \cite{Shp},
we do not restrict the way how $q$ approaches infinity. In particular, we do
not fix $p$.

To formulate our results, we need the following notion.

\begin{definition}\label{d.D}
The \emph{discrepancy} of a finite multiset of complex numbers
$\{z_1,\dots,z_N\}$ on the unit circle is defined to be
\[D=\sup_{a\le b \le a+1} \left\lvert \frac{T(a,b)}{N}-(b-a)\right\rvert,\]
where $T(a,b)$ is the number of $1\le i \le N$ such that there exists
$c\in[a,b]$ satisfying $z_i=e^{2\pi i c}$.  For $N=0$ we put $D=1$. We say
that a sequence or net of such multisets
$(\{z_{\alpha,1},\dots,z_{\alpha,N_\alpha}\})_{\alpha\in I}$ is
\emph{asymptotically equidistributed} if $D=o(1)$.
\end{definition}

For $N\ge 1$ we have $\frac{1}{N}\le D\le 1$. We let $D(\cA_1,\dots,\cA_m)$
denote the discrepancy of the multiset \eqref{e.seq}.

\begin{theorem}\label{t.1}
Let $m\ge 2$ and let $\cA_1,\dots,\cA_m$ be nonempty subsets of $\cX$. Let
$A_1=\#\cA_1$, $A_2=\#\cA_2$. Then
\begin{gather}
\label{e.11} D(\cA_1,\dots,\cA_m)\le 3 A_1^{-1/3}q^{1/6}+\tfrac{1}{9}(A_1A_2)^{-1/2}q^{1/2}(6+\ln
q),\\
\label{e.12} D(\cA_1,\dots,\cA_m)\le 2A_1^{-2/7}A_2^{-1/7}q^{3/14}+\tfrac{1}{5}A_1^{-1/2}A_2^{-1/4}q^{1/2}(4+\ln q).
\end{gather}
\end{theorem}

Since $D(\cA_1,\dots,\cA_m)$ is symmetric in the $\cA_i$'s, \eqref{e.11} is
equivalent to
\[D(\cA_1,\dots,\cA_m)\le 3(\max_i \#\cA_i)^{-1/3}q^{1/6}+\tfrac{1}{9}(\max_{i\neq j} \#\cA_i\#\cA_j)^{-1/2}q^{1/2}(6+\ln
q).
\]
Therefore, \eqref{e.seq} is asymptotically equidistributed when $q\ln^2
q/\max_{i\neq j}\#\cA_i\#\cA_j\to 0$. We note that this condition cannot be
substantially improved. In fact, for $\cA_2,\dots,\cA_m$ satisfying
$\#\cA_2=\dots=\#\cA_m=1$, there exists $\cA_1$ satisfying $\#\cA_1\ge
(q-3)/2$ such that \eqref{e.seq} is contained in a semicircle, so that
$D(\cA_1,\dots,\cA_m)\ge \frac{1}{2}$.

\begin{cor}\label{c.1}
There exists a constant $C$ such that for all $m\ge 2$ and for nonempty
subsets $\cA_1,\dots,\cA_m$ of $\cX$, we have
\[D(\cA_1,\dots,\cA_m)\le C q^{-f(\log_q \#\cA_1,\log_q \#\cA_2)}\ln q,\]
where $f\colon [0,1]\times [0,1]\to [0,\frac{3}{14}]$ is the function
satisfying $f(x,y)=f(y,x)$ and such that for $x\ge y$,
\begin{equation}\label{e.f}
f(x,y)=\begin{cases}
0& x+y\le 1,\\
\tfrac{1}{2}x+\tfrac{1}{2}y-\tfrac{1}{2}& x+y\ge 1\text{ and }x+3y\le 2,\\
\tfrac{1}{3}x-\tfrac{1}{6} & x+3y\ge 2\text{ and }2x+3y\le 4,\\
\tfrac{1}{2}x+\tfrac{1}{4}y-\tfrac{1}{2}&2x+3y\ge 4\text{ and }2x+y\le \tfrac{8}{3},\\
\tfrac{2}{7}x+\tfrac{1}{7}y-\tfrac{3}{14}&2x+y\ge \tfrac{8}{3}.
\end{cases}
\end{equation}
\end{cor}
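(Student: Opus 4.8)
The plan is to read the exponent of $q$ directly off the bound in Theorem~\ref{t.1}. Write $x=\log_q\#\cA_1$ and $y=\log_q\#\cA_2$, so that $x,y\in[0,1]$ (here $1\le\#\cA_i\le\#\cX<q$). Since the Jacobi sum $J(\chi_1,\dots,\chi_m)$ is invariant under permutations of its arguments, the sequence \eqref{e.seq} and hence $D(\cA_1,\dots,\cA_m)$ is symmetric in the $\cA_i$; this simultaneously explains the required symmetry $f(x,y)=f(y,x)$ and lets me relabel so that $\#\cA_1\ge\#\cA_2$, i.e.\ $x\ge y$. That relabelling is also optimal for applying Theorem~\ref{t.1}: in each summand of the bound the first slot carries a power of its set at least as negative as the second (compare $A_1^{-1/3}$ against $A_2^0$, and $A_1^{-2/7}A_2^{-1/7}$, $A_1^{-1/2}A_2^{-1/4}$), so placing the larger set first minimises every summand at once. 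Thus it suffices to analyse the bound for $x\ge y$.

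Next I would absorb all inessential factors into $q^{o_q(1)}$. Each summand has the form $c\,q^{L(x,y)}(\ln q)^{\delta}$ with $c$ and $\delta\in\{0,1\}$ absolute, and the prefactor $1+\tfrac1{100}q^{-1/2}$ is bounded; since $\ln q=q^{o_q(1)}$ these contribute only $q^{o_q(1)}$. The bound therefore equals $q^{E(x,y)+o_q(1)}$ with
\[
E(x,y)=\min\Bigl\{\max\bigl(\tfrac16-\tfrac x3,\ \tfrac12-\tfrac{x+y}2\bigr),\ \max\bigl(\tfrac3{14}-\tfrac{2x}7-\tfrac y7,\ \tfrac12-\tfrac x2-\tfrac y4\bigr)\Bigr\},
\]
the inner maxima coming from the two bracketed sums and the outer minimum from the $\min$ in the theorem. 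Because a discrepancy never exceeds $1$, the effective bound is $q^{\min\{0,E\}+o_q(1)}$, so the exponent asserted in the corollary is $f=\max\{0,-E\}$, and it remains to identify this with the stated piecewise formula.

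The heart of the argument is then a case analysis over $\{(x,y)\in[0,1]^2:x\ge y\}$ deciding which of the four affine functions is active. Setting the functions equal in pairs produces precisely the lines in the statement: $\tfrac16-\tfrac x3=\tfrac12-\tfrac{x+y}2$ on $x+3y=2$, $\tfrac16-\tfrac x3=\tfrac12-\tfrac x2-\tfrac y4$ on $2x+3y=4$, and $\tfrac3{14}-\tfrac{2x}7-\tfrac y7=\tfrac12-\tfrac x2-\tfrac y4$ on $2x+y=\tfrac83$, while $x+y=1$ is where the symmetric summand $\tfrac12-\tfrac{x+y}2$ vanishes. Reading off the sign of $E$ and of the relevant differences in each region gives $-E=0,\ \tfrac{x+y-1}2,\ \tfrac x3-\tfrac16,\ \tfrac x2+\tfrac y4-\tfrac12,\ \tfrac{2x}7+\tfrac y7-\tfrac3{14}$ on the five regions in turn; for $x+y\le1$ one checks $E\ge0$ using $\tfrac x2+\tfrac y4\le\tfrac{x+y}2\le\tfrac12$, so $f=0$ there.

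I expect the bulk of the work — and the only place to be careful — to be this bookkeeping: verifying that the five regions actually tile the domain, and that in each the nominated function is the genuine minimiser rather than merely coinciding with another on a boundary, which amounts to checking the sign of the difference of the two bracket-maxima (the first bracket active in the middle region, the second in the high-$x$ regions). The more conceptual caveat is that Theorem~\ref{t.1} supplies only the upper estimate $D(\cA_1,\dots,\cA_m)\le q^{-f+o_q(1)}$; the equality in the corollary is to be read as identifying $f$ with the exact exponent of this established upper bound, and reading it as a two-sided statement about $D$ itself would additionally require a matching lower bound on the discrepancy, which does not follow from Theorem~\ref{t.1} alone.
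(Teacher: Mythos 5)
Your proposal is correct and coincides with the paper's own (implicit) argument: the corollary is stated there without proof, being exactly the translation of Theorem \ref{t.1} that you carry out --- pass to $q$-exponents, absorb the constants and $\ln q$ factors into $q^{o_q(1)}$, use $D\le 1$ together with the symmetry of the Jacobi sum to reduce to $x\ge y$, and identify $\max\{0,-E\}$ with $f$ by the region-by-region comparison you describe, whose bookkeeping does check out (the five regions tile $\{x\ge y\}$ since the defining inequalities form a nested chain, and in each region the nominated affine function is the genuine minimum). Your closing caveat is also the right reading of the statement: the paper establishes only the upper estimate $D\le q^{-f+o_q(1)}$ and contains no lower bound on the discrepancy, so the displayed equality can only be understood as identifying the exponent of that upper bound.
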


Note that the function $f(x,y)$ is nondecreasing with respect to both $x$
and $y$, continuous, and is linear on each piece of the following partition
of $[0,1]\times[0,1]$
\begin{center}
\begin{picture}(14,11)(-2,-1)
  \put(0,0){\framebox(9,9){}}
  \put(4.5,4.5){\line(1,1){4.5}}
  \put(4.5,4.5){\line(3,-1){4.5}}
  \put(4.5,4.5){\line(-1,3){1.5}}
  \put(7.2,7.2){\line(3,-2){1.8}}
  \put(7.2,7.2){\line(-2,3){1.2}}
  \put(0,9){\line(1,-1){9}}
  \put(8,8){\line(1,-2){1}}
  \put(8,8){\line(-2,1){2}}
  \put(\leftlabel,\bottomlabel){$(0,0)$}
  \put(\rightlabel,\bottomlabel){$(1,0)$}
  \put(\rightlabel,\toplabel){$(1,1)$}
  \put(\leftlabel,\toplabel){$(0,1)$}
  \put(2.9,3.7){$(\frac{1}{2},\frac{1}{2})$}
  \put(\rightlabel,6){$(1,\frac{2}{3})$}
  \put(5,\toplabel){$(\frac{2}{3},1)$}
  \put(1.8,\toplabel){$(\frac{1}{3},1)$}
  \put(\rightlabel,3){$(1,\frac{1}{3})$}
  \put(5.2,7.2){$(\frac{4}{5},\frac{4}{5})$}
  \put(7.2,8.5){\scriptsize $(\frac{8}{9},\frac{8}{9})$}
  \put(6,1){I}
  \put(7.5,2){II}
  \put(7.5,5){III}
  \put(7.5,7){IV}
  \put(8.4,7.5){V}
\end{picture}
\end{center}
with $f(0,0)=f(1,0)=f(\frac{1}{2},\frac{1}{2})=0$,
$f(\frac{4}{5},\frac{4}{5})=\frac{1}{10}$,
$f(1,\frac{1}{3})=f(1,\frac{2}{3})=f(\frac{8}{9},\frac{8}{9})=\frac{1}{6}$,
$f(1,1)=\frac{3}{14}$. The pieces marked with I, II, III, IV, V correspond
to the five cases of \eqref{e.f}.

Next we give better upper bounds for the discrepancy when some of the
subsets are actually $\cX$. We put
$D_k(\cA_1,\dots,\cA_m)=D(\cA_1,\dots,\cA_m,\cX,\dots,\cX)$ for $m,k\ge 1$
and $D_k=D(\cX,\dots,\cX)$ for $k\ge 2$, where $\cX$ is repeated $k$ times.

\begin{theorem}\label{t.2}
Let $m\ge 1$ and let $\cA_1,\dots,\cA_m$ be nonempty subsets of $\cX$. Let
$A_1=\#\cA_1$. Then, for $k\ge 2$,
\begin{gather}
\label{e.21}D_k(\cA_1,\dots,\cA_m)\le 2q^{-\frac{k}{2(k+1)}}(1+k!q^{-1/6}\ln q),\\
\label{e.22}D_k(\cA_1,\dots,\cA_m)\le 2A_1^{-\frac{1}{2k+3}}q^{-\frac{2k-1}{2(2k+3)}}\{1+q^{-2/7}[7^{k-1} +(2k+1)!!^{1/2}\ln q]\}.
\end{gather}
For $k=1$, we have
\begin{equation}
\label{e.23}D_1(\cA_1,\dots,\cA_m)\le 2
q^{-1/4}+\tfrac{1}{6}\delta A_1^{-1}(5+\ln q)(1+2q^{-1/2}),
\end{equation}
where $\delta=0$ if $m=1$ and $\delta=1$ if $m>1$. Moreover, for $A_1\ge
q^{3/4}$, we have
\begin{equation}\label{e.24}
D_1(\cA_1,\dots,\cA_m)\le 2A_1^{-1/5}q^{-1/10}(1+q^{-1/8}\ln q).
\end{equation}
\end{theorem}

\begin{cor}\label{c.2}
Let $k\ge 1$. There exists a constant $C_k$ such that for all $m\ge 1$
(assuming $m=1$ if $k=1$) and for nonempty subsets $\cA_1,\dots,\cA_m$ of
$\cX$, we have
\[D_k(\cA_1,\dots,\cA_m)\le C_k q^{-g_{k}(\log_q \#\cA_1)},\]
where $g_{k}\colon [0,1]\to [\tfrac{k}{2(k+1)},\tfrac{2k+1}{2(2k+3)}]$ is
the function
\[g_{k}(x)=\begin{cases}
  \tfrac{k}{2(k+1)}&x\le \tfrac{2k+1}{2k+2},\\
  \tfrac{1}{2k+3}x+\tfrac{2k-1}{2(2k+3)}&x\ge \tfrac{2k+1}{2k+2}.
\end{cases}
\]
For $k=1$, there exists a constant $C'$ such that for all $m\ge 1$, we have
\[D_1(\cA_1,\dots,\cA_m)\le C'q^{-h(\log_q \#\cA_1)}\ln q,\]
where $h\colon [0,1]\to [0,\frac{3}{10}]$ is the function
\[h(x)=\begin{cases}
x&0\le x\le \tfrac{1}{4},\\
\tfrac{1}{4}& \tfrac{1}{4}\le x\le \tfrac{3}{4}, \\
\tfrac{1}{5}x+\tfrac{1}{10}&\tfrac{3}{4}\le x \le 1.
\end{cases}
\]
\end{cor}

Note that the functions $g_{k}(x)$ and $h(x)$ are nondecreasing, continuous
and piecewise-linear. Corollary \ref{c.2} for $k=1$ improves the case
$\cA_1=\cX$ of Corollary \ref{c.1}, since $f(1,x)\le h(x)\le g_{1}(x)$.
Moreover, Corollary \ref{c.2} for $k\ge 2$ improves the case $\cA_1=\cX$ of
Corollary \ref{c.2} for $k-1$, since $g_{k-1}(1)=\tfrac{2k-1}{2(2k+1)}<
\tfrac{k}{2(k+1)}=g_{k}(0)\le g_{k}(x)$.

When all of the subsets are $\cX$, we have the following extension of
\eqref{e.21}.

\begin{theorem}\label{t.3}
For $k\ge 2$, $q\ge 3$, we have
\begin{equation}\label{e.3}
D_k\le 2q^{-\frac{k}{2(k+1)}}(1+k!q^{-1/6}\ln q).
\end{equation}
\end{theorem}

This improves the case $m=1$, $\cA_1=\cX$ of Corollary \ref{c.2} for $k-1$.
For $k=2$, we recover the result $D_2=O(q^{-1/3})$ of Katz and Zhiyong Zheng
\cite[Theorem~1]{KZ}.

To prove the above theorems, we use the Erd\H os-Tur\'an inequality together
with estimates of moments of Jacobi sums. Our method of estimating moments
of Jacobi sums is based on the theory of Kloosterman sheaves as in
\cite{KZ}, but we need estimates for higher tensor powers of Kloosterman
sheaves, which we give in Section~\ref{s.2}. We give estimates for moments
of Jacobi sums in Section~\ref{s.3}. In Section~\ref{s.4}, we prove the
upper bounds for the discrepancy and give a lower bound for $D_k$, $k\ge 3$.

\section{A key lemma}\label{s.2}
In the rest of this paper, we fix a nontrivial additive character $\psi$ on
$\F_q$ and omit it from the notation. For $n\ge 1$ and $a\in \F_q^\times$,
we consider the Kloosterman sum
\[\Kl_n(a)=\sum_{\substack{a_1,\dots,a_n\in \F_q^\times\\a_1\dotsm a_n=a}} \psi(a_1+\dots+a_n).\]
We have $\Kl_1(a)=\psi(a)$. The Fourier transform of $\Kl_n(a)$ is the
$n$-th power of the Gauss sum $G(\chi)$:
\begin{equation}\label{e.Fourier}
G(\chi)^n=\sum_{a\in \F_q^\times}\Kl_n(a)\chi(a)
\end{equation}
for all $\chi\in \Xbar=\widehat{\F_q^\times}$ \cite[4.0, page 47]{Katz}.

\begin{lemma}\label{l.Kl}
Let $n\ge 1$, $k,l\ge 0$. Let $\chi$ be a nontrivial multiplicative
character of $\F_q^\times$. Then
\begin{gather}
\label{e.Kl1}\left\lvert \sum_{a\in \F_q^\times} \Kl_n(a)^k\Klb_n(a)^l  -R q^\frac{(n-1)(k+l)+2}{2}\right\rvert\le
\left(\left\lfloor n^{k+l-1}-\frac{R}{n}\right\rfloor+R\right)q^{\frac{(n-1)(k+l)+1}{2}},\\
\label{e.Kl2}\left\lvert \sum_{a\in \F_q^\times} \chi(a)\Kl_n(a)^k\Klb_n(a)^l \right\rvert
\le \left\lfloor n^{k+l-1}-\frac{R}{n}\right\rfloor q^{\frac{(n-1)(k+l)+1}{2}}.
\end{gather}
Here $R=R^{k,l}_{p,n}$ is the dimension of $(V^{\otimes l}\otimes
(V^*)^{\otimes k})^G$, where $V$ is the standard complex representation of
$G$ of dimension $n$,
\[G=\begin{cases}
  \mu_p &n=1,\\
  \Sp_n& n\text{ even},\\
  \SL_n& p,n\ge 3\text{ odd},\\
  \SO_n& p=2,\ n\neq 1,7\text{ odd},\\
  G_2& p=2,\ n=7,
\end{cases}\]
and $\mu_p$ is the group of $p$-th roots of unity in $\C$.
\end{lemma}

Let $E\subset \C$ be a number field containing the $p$-th roots of unity and
let $\lambda$ be a finite place of $E$ not dividing $p$. Recall from Deligne
\cite[Th\'eor\`eme 7.8]{Deligne} that the Kloosterman sheaf $\cK_n$ is a
lisse $E_\lambda$-sheaf on $\Gms{\F_q}$ of rank $n$ and weight $n-1$
satisfying
\[\tr(\Fr_a,(\cK_n)_{\bar a})=(-1)^{n-1} \Kl_n(a),\]
where $\Fr_a$ is the geometric Frobenius at $a\in \Gm(\F_q)=\F_q^\times$ and
$\bar a$ is a geometric point above $a$. Moreover,
\[\tr(\Fr_a,(\cK_n\spcheck)_{\bar a})=(-1)^{n-1} q^{-(n-1)}\Klb_n(a).\]
The group $G$ in the lemma is the Zariski closure of the geometric monodromy
group of $\cK_n$ as computed by Katz \cite[Theorem 11.1]{Katz}.

Deligne's bound $\lvert \Kl_n(a)\rvert \le nq^{\frac{n-1}{2}}$ implies that
the left hand side of \eqref{e.Kl2} is bounded by
$n^{k+l}(q-1)q^{\frac{(n-1)(k+l)}{2}}$. Thus \eqref{e.Kl2} is nontrivial. We
will see in Remark \ref{r.R} that $R\le (k+l-1)!$ (by convention $(-1)!=1$),
so that \eqref{e.Kl1} provides a nontrivial upper bound for $\left\lvert
\sum_{a\in \F_q^\times} \Kl_n(a)^k\Klb_n(a)^l\right\rvert$ at least when $n$
is large relative to $k$ and $l$. For $k=2$, $l=1$, \eqref{e.Kl1} recovers
the bound $\left\lvert \sum_{a\in \F_q^\times}
\Kl_n(a)^2\Klb_n(a)\right\rvert\le
Rq^{\frac{3n-1}{2}}+n^2q^{\frac{3n-2}{2}}$ in \cite[Key Lemma 8, page
549]{KZ} (in this case $R=0$ or $1$, see Remark \ref{r.small} below).

\begin{proof}[Proof of Lemma \ref{l.Kl}]
Recall \cite[Th\'eor\`eme 7.8]{Deligne} that the local monodromy of $\cK_n$
at $0$ is unipotent and tame. The local monodromy at $\infty$ is totally
wild with Swan conductor $\swan_{\infty}(\cK_n)=1$, so that all breaks are
$1/n$ \cite[Lemma 1.11]{Katz}.

By the Grothendieck trace formula,
\[\sum_{a\in \F_q^\times} \Kl_n(a)^k\Klb_n(a)^l=(-1)^{(n-1)(k+l)}q^{(n-1)l}\sum_{i=0}^2(-1)^i\tr(\Fr_q,H^i_c),\]
where $H^i_c=H^i_c(\Gms{\Fqb},\cK_n^{\otimes k}\otimes
(\cK_n\spcheck)^{\otimes l})$. We have $H^0_c=0$ and, by Poincar\'e duality,
\[H^2_c\simeq
H^0(\Gms{\Fqb},\cK_n^{\otimes l}\otimes (\cK_n\spcheck)^{\otimes
k})\spcheck(-1)
\]
has dimension $h^2_c=R$. By \cite[Corollary 11.3]{Katz}, the arithmetic
fundamental group of $\cK_n(\frac{n-1}{2})$ (well-defined up to adjoining
$q^{\frac{n-1}{2}}$ to $E$) coincides with $G$. Thus
\[\tr(\Fr_q,H^2_c)=Rq^{\frac{(n-1)(k-l)+2}{2}}.\]
Moreover, $(n-1)(k+l)$ is even whenever $R>0$. By Deligne's Weil II
\cite[Th\'eor\`eme 3.3.1]{WeilII},
\[\left\lvert \tr(\Fr_q,H^1_c) \right\rvert \le h^1_c q^{\frac{(n-1)(k-l)+1}{2}},\]
where $h^1_c=\dim H^1_c$. The sheaf $\cK_n^{\otimes k}\otimes
(\cK_n\spcheck)^{\otimes l}$ has rank $n^{k+l}$ and is tame at $0$. All
breaks at $\infty$ of this sheaf are at most $1/n$ by \cite[Lemma 1.3]{Katz}
and at least $R$ breaks are $0$. It follows that the Swan conductor
\[\swan_\infty(\cK_n^{\otimes k}\otimes (\cK_n\spcheck)^{\otimes l})\le
\lfloor(n^{k+l}-R)/n\rfloor .
\]
The inequality \eqref{e.Kl1} then follows from the
Grothendieck-Ogg-Shafarevich formula \cite[Th\'eor\`eme  7.1]{GOS}
\[h^1_c=h^2_c+\swan_\infty(\cK_n^{\otimes k}\otimes (\cK_n\spcheck)^{\otimes l}).\]

For \eqref{e.Kl2}, we may assume that $E$ contains the image of $\chi$. Let
$\cL_\chi$ be the lisse $E_\lambda$-sheaf of rank $1$ on $\Gms{\F_q}$
corresponding to $\chi$. As the local monodromy at $0$ of $\cK_n^{\otimes
l}\otimes (\cK_n\spcheck)^{\otimes k}\otimes\cL_\chi\spcheck$ is given by a
successive extension of $\bar\chi$, we have
\[H^2_c(\Gms{\Fqb},\cL_\chi\otimes\cK_n^{\otimes k}\otimes (\cK_n\spcheck)^{\otimes l})\simeq H^0(\Gms{\Fqb},\cK_n^{\otimes l}\otimes (\cK_n\spcheck)^{\otimes k}\otimes\cL_\chi\spcheck)\spcheck(-1)=0.\]
The rest of the proof is completely similar to the proof of the first
assertion.
\end{proof}

\begin{remark}
We gather some formulas and bounds for the constant $R=R^{k,l}=R^{k,l}_{G}$
in the above lemma. We have $R^{k,l}=R^{l,k}$. For $G=\Sp_n$, $\SO_n$, or
$G_2$, $V^*\simeq V$ so that $R^{k,l}$ depends only on $k+l$ (and $G$). In
this case, we put $R^{k+l}=R^{k,l}$.

For $G=\mu_p$, $R^{k,l}=1$ if $k\equiv l \pmod p$ and $R^{k,l}=0$ otherwise.

For $G=\Sp_n$ ($n$ even), we let $V_\lambda$ denote the irreducible
representation corresponding to a partition $\lambda=(\lambda_1\ge \dots \ge
\lambda_{n/2}\ge 0)$ (where the $\lambda_j$'s are integers). We have
$V\simeq V_\sigma$, where $\sigma=(1,0,\dots,0)$. By King's formula
\cite[(4.14), (4.15), (4.31)]{King}, we have
\[V_\lambda\otimes V_\sigma\simeq \bigoplus_{\lambda'}V_{\lambda'},\]
where $\lambda'$ runs through $\sigma$-expansions and $\sigma$-contractions
of $\lambda$. Here we say that $\lambda'$ is a $\sigma$-expansion of
$\lambda$, or equivalently $\lambda$ is a $\sigma$-contraction of
$\lambda'$, if there exists $j$ satisfying $\lambda'_j=\lambda_j+1$ and
$\lambda'_{j'}=\lambda_{j'}$ for all $j'\neq j$. Thus $R^k$ is the number of
sequences of partitions $\lambda^{(0)},\dots, \lambda^{(k)}$ with
$\lambda^{(0)}=\lambda^{(k)}=(0,\dots,0)$, such that for each $0\le i<k$,
$\lambda^{(i+1)}$ is a $\sigma$-expansion or a $\sigma$-contraction of
$\lambda^{(i)}$.  Moreover, by classical invariant theory \cite[Section
VI.7]{Weyl}, $(V^{\otimes k})^G$ is spanned by the invariants given by
partitions of $\{1,\dots, k\}$ into pairs, so that $R^k\le (k-1)!!$, and
equality holds if and only if $k\le n$ and $k$ even. Here we adopt the
convention that $(-1)!!=1$. For $k$ odd, $R^k=0$.

For $G=\SO_n$ ($n$ odd), we let $V_\lambda$ denote the irreducible
representation of $\rO_n=\SO_n\times \{\pm 1\}$ corresponding to a partition
$\lambda=(\lambda_1\ge \dots ge\lambda_{n}\ge 0)$ satisfying
$\lambda^T_1+\lambda^T_2\le n$, where $\lambda^T$ denotes the conjugate of
$\lambda$. We have $V\simeq \Res^{\rO_n}_{\SO_n}V_\sigma$ and, for
$\lambda\neq \lambda'$, $\Res^{\rO_n}_{\SO_n} V_{\lambda}\simeq
\Res^{\rO_n}_{\SO_n} V_{\lambda'}$ if and only if
$\lambda^T_1+\lambda'^T_1=n$ and $\lambda^T_j=\lambda'^T_j$ for all $j>1$.
By King's formula for $\rO_n$ \cite[(4.14), (4.15)]{King}, we have
$V_\lambda\otimes V_\sigma\simeq \bigoplus_{\lambda'}V_{\lambda'}$, where
$\lambda'$ runs through $\sigma$-expansions and $\sigma$-contractions of
$\lambda$. Thus, for $k$ odd (resp.\ even), $R^k$ is the number of sequences
of partitions $\lambda^{(0)},\dots, \lambda^{(k)}$, where
$\lambda^{(i)}=(\lambda^{(i)}_1\ge \dots \ge \lambda^{(i)}_{n}\ge 0)$,
$\lambda^{(0)}=(0,\dots,0)$, $\lambda^{(k)}=(1,\dots,1)$ (resp.\
$\lambda^{(k)}=(0,\dots,0)$), such that for each $i$, $\lambda^{(i+1)}$ is a
$\sigma$-expansion or a $\sigma$-contraction of $\lambda^{(i)}$. Moreover,
by classical invariant theory \cite[Sections II.9, II.17]{Weyl}, for $k$
odd, $(V^{\otimes k})^G$ is spanned by the images of $\C\simeq \wedge^n V
\subset V^{\otimes n}$ under the expansion operators $V^{\otimes n}\to
V^{\otimes k}$ given by an injection $\{1,\dots, n\}\hookrightarrow
\{1,\dots, k\}$ and a partition of the complement into pairs, so that
$R^k=0$ for $k<n$ and $R^k\le \binom{k}{n}(k-n-1)!!\le (k-1)!$ for $k\ge n$
(assuming $n\ge 3$). For $k$ even, $(V^{\otimes k})^G$ is spanned by the
invariants given by partitions of $\{1,\dots, k\}$ into pairs, so that
$R^k\le (k-1)!!$, and equality holds if and only if $k\le 2n$.

For $G=G_2$, we let $V_\lambda$ denote the irreducible representation
corresponding to a partition $\lambda=(\lambda_1\ge \lambda_2\ge 0)$, so
that $V_{0,0}=\C$, $V_{1,0}=V$. By Littelmann's generalized
Littlewood-Richardson rule \cite[3.8]{Littel}, we have $V_\lambda\otimes
V^{\otimes k} \simeq \bigoplus_{\lambda'} V_{\lambda'}$, where $\lambda'$
satisfies one of the following
\begin{itemize}
\item $\lambda'$ is a $\sigma$-expansion or a $\sigma$-contraction of
    $\lambda$; or

\item $\lambda'_1=\lambda_1\pm 1$ and $\lambda'_2=\lambda_2\mp 1$; or

\item $\lambda'=\lambda$ and $\lambda_1>\lambda_2$.
\end{itemize}
Note that, for each $\lambda$, there are at most $7$ possibilities for
$\lambda'$. We have
\begin{gather*}
V^{\otimes 2}\simeq V_{0,0} \oplus V_{1,0} \oplus V_{2,0} \oplus V_{1,1}, \qquad
V^{\otimes 3}\simeq V_{0,0} \oplus V_{1,0}^{\oplus 4} \oplus V_{2,0}^{\oplus 3} \oplus V_{3,0} \oplus V_{1,1}^{\oplus 2} \oplus V_{2,1}^{\oplus 2},\\
V^{\otimes 4}\simeq V_{0,0}^{\oplus 4} \oplus V_{1,0}^{\oplus 10} \oplus V_{2,0}^{\oplus 12} \oplus V_{3,0}^{\oplus 6} \oplus V_{4,0}
\oplus V_{1,1}^{\oplus 9} \oplus V_{2,1}^{\oplus 8} \oplus V_{3,1}^{\oplus 3} \oplus V_{2,2}^{\oplus 2},
\end{gather*}
and, for $k\ge 4$, the multiplicities appearing in the decomposition of
$V_\lambda$ are at most $12\cdot 7^{k-4}$. Since $R^k_{G_2}$ is the
multiplicity of $V_{1,0}$ in $V^{\otimes (k-1)}$, we have
\begin{equation}\label{e.G2}
R^k_{G_2}\le 12\cdot 7^{k-5}
\end{equation}
for $k\ge 5$. Moreover, $(V^{\otimes k})^G$ is spanned by invariants given
by partitions of $\{1,\dots,k\}$ into subsets of cardinality $2$, $3$, or
$4$ by \cite[Theorem 3.23]{Schwarz}. It follows from this or \eqref{e.G2}
that $R^k\le (k-1)!$.\footnote{The sequence $R^k_{G_2}$ $(k\ge 0)$ is
sequence A059710 in the On-Line Encyclopedia of Integer Sequences. The first
terms are $1, 0, 1, 1, 4, 10, 35, 120, 455$.}

For $G=\SL_n$, we let $V_\lambda$ denote the representation of $\GL_n$
corresponding to a sequence $(\lambda_1\ge \dots \ge \lambda_n)$ (where the
$\lambda_j$'s are integers, possibly negative), so that $V\simeq
\Res^{\GL_n}_{\SL_n}V_\sigma$ and $\Res^{\GL_n}_{\SL_n}V_\lambda \simeq
\Res^{\GL_n}_{\SL_n} V_{\lambda'}$ if and only if $\lambda$ and $\lambda'$
are congruent modulo $(1,\dots,1)$. By the Littlewood-Richardson rule (or
Petri's formula), $V_\lambda\otimes V_\sigma \simeq \bigoplus V_{\lambda'}$
where $\lambda'$ runs through $\sigma$-expansions of $\lambda$ and
$V_\lambda\otimes V_\sigma^* \simeq \bigoplus V_{\lambda'}$ where $\lambda'$
runs through $\sigma$-contractions of $\lambda$. Thus $R^{k,l}\neq 0$ if and
only if $k\equiv l \pmod n$. In this case, $R^{k,l}$ is the number of
sequences of partitions $\lambda^{(0)},\dots, \lambda^{(k+l)}$, where
$\lambda^{(0)}=(0,\dots,0)$,
$\lambda^{(k+l)}=(\frac{l-k}{n},\dots,\frac{l-k}{n})$, such that for each
$0\le i<l$, $\lambda^{(i+1)}$ is a $\sigma$-expansion of $\lambda^{(i)}$,
and for each $l\le i<k+l$, $\lambda^{(i+1)}$ is a $\sigma$-contraction of
$\lambda^{(i)}$. We let $\delta(\lambda)$ denote the number of $1\le j< n$
such that $\lambda_{j+1}\neq \lambda_j$. We have $0\le \delta(\lambda)\le
n-1$. The number of $\sigma$-expansions and the number of
$\sigma$-contractions of $\lambda$ are both equal to $\delta(\lambda)+1$.
Moreover, for any $\sigma$-expansion or $\sigma$-contraction $\lambda'$ of
$\lambda$, $\lvert \delta(\lambda')- \delta(\lambda)\rvert \le 1$. Thus
$R^{k,l}\le \lfloor \frac{k+l}{2}\rfloor!\lfloor \frac{k+l-1}{2}\rfloor !$.
We will be particularly interested in $R^{k,1}$ and $R^{k,k}$. For $k\equiv
1\pmod n$, $R^{k,1}=R^{1,k}$ is the number of standard Young tableaux on the
Young diagram corresponding to
$(\frac{k-1}{n}+1,\frac{k-1}{n},\dots,\frac{k-1}{n})$, so
\[R^{k,1}_{\SL_n}=k!/\frac{(n+\frac{k-1}{n})!}{n!}\prod_{i=0}^{n-2}\frac{(i+\frac{k-1}{n})!}{i!}\]
by the hook length formula. For any $k$, $R^{k,k}$ is the dimension of
$\End(V^{\otimes k})^G$ and we have
\[R^{k,k}_{\SL_n}=\sum_\lambda m_\lambda^2\le k!,\]
where equality holds if and only if $k\le n$. Here $\lambda$ runs over
partitions $\lambda=(\lambda_1\ge \dots \ge\lambda_n\ge 0)$ satisfying
$\sum_i\lambda_i=k$, and $m_\lambda$ is the multiplicity of $V_\lambda$ in
$V_\sigma^{\otimes k}$, namely the number of standard Young tableaux on the
Young diagram corresponding to $\lambda$.
\end{remark}

\begin{remark}\label{r.R}
By the preceding remark, we have $R^{k,l}_G\le (k+l-1)!$ in all cases.
Moreover, for $G\neq G_2$, $R^{k,k}_G\le (2k-1)!!$.
\end{remark}

\begin{remark}\label{r.small}
Let us list the values of $R^{k,1}_G$ and $R^{k,k}_G$ for $1\le k\le 3$.
\begin{itemize}
\item $R^{1,1}_G=1$ in all cases.

\item $R^{2,1}_G=1$ for $G=\SO_3$ or $G_2$ and $R^{2,1}_G=0$ otherwise.

\item $R^{3,1}_G=0$ for $G=\mu_p$ ($p$ odd) or $\SL_n$,
    $R^{3,1}_{\mu_2}=1$, $R^{3,1}_{\Sp_2}=2$, $R^{3,1}_G=3$ for $G=\Sp_n$
    ($n\ge 4$) or $\SO_n$, and $R^{3,1}_{G_2}=4$.

\item $R^{2,2}_{\mu_p}=1$, $R^{2,2}_G=2$ for $G=\Sp_2$ or $\SL_n$,
    $R^{2,2}_G=3$ for $G=\Sp_n$ ($n\ge 4$) or $\SO_n$, and
    $R^{2,2}_{G_2}=4$.

\item $R^{3,3}_{\mu_p}=1$, $R^{3,3}_{\Sp_2}=5$, $R^{3,3}_{\SL_n}=6$,
    $R^{3,3}_{\Sp_4}=14$, $R^{3,3}_G=15$ for $G=\Sp_n$ ($n\ge 6$) or
    $\SO_n$, and $R^{3,3}_{G_2}=35$.
\end{itemize}
\end{remark}

\section{Moments of Jacobi sums}\label{s.3}

For subsets  $\cA_1,\dots,\cA_m$ of $\cX$, $m\ge 2$ and $n\ge 1$, we
consider the incomplete $n$-th moment of the normalized Jacobi sums
\eqref{e.seq}:
\[M^{(n)}(\cA_1,\dots,\cA_m)=\sum_{\substack{\chi_i\in \cA_i\\\chi_1\dotsm \chi_m\neq \bone}}
q^{-n(m-1)/2} J(\chi_1,\dots,\chi_m)^n.
\]
When some of the subsets are $\cX$, we adopt the following shorthand,
similar to the notation on discrepancy. We put
$M^{(n)}_k(\cA_1,\dots,\cA_m)=M^{(n)}(\cA_1,\dots,\cA_m,\cX,\dots,\cX)$ for
$m,k\ge 1$ and $M^{(n)}_k=M^{(n)}(\cX,\dots,\cX)$ for $k\ge 2$, where $\cX$
is repeated $k$ times. The statements of the following theorems make use of
the notation $R^{k,l}_{p,n}$ introduced in Lemma \ref{l.Kl}.

\begin{theorem}\label{t.moment1}
Let $m\ge 2$ and let $\cA_1,\dots,\cA_m$ be subsets of $\cX$. Let
$A_i=\#\cA_i$, $i=1,\dots,m$. Then, for $n\ge 1$,
\begin{gather}
\label{e.m11}\lvert M^{(n)}(\cA_1,\dots,\cA_m)\rvert\le (A_1A_2)^{1/2} A_3\dotsm A_m
[q+(n-1)A_2q^{1/2}]^{1/2},\\
\label{e.m12}\lvert M^{(n)}(\cA_1,\dots,\cA_m)\rvert\le A_1^{1/2}A_2^{3/4} A_3\dotsm A_m [R^{2,2}_{p,n}q^2+(n^3+R^{2,2}_{p,n}-1)q^{3/2}]^{1/4}.
\end{gather}
\end{theorem}

Recall from Remark \ref{r.R} that $R^{2,2}_{p,n}\le 3$ except for
$(p,n)=(2,7)$ in which case $R^{2,2}_{2,7}=4$.

\begin{theorem}\label{t.m2}
Let $k,m\ge 1$ and let $\cA_1,\dots,\cA_m$ be nonempty subsets of $\cX$. Let
$A_i=\#\cA_i$, $i=1,\dots,m$. Then, for $n\ge 1$,
\begin{gather}
\label{e.m21}\lvert M^{(n)}_k(\cA_1,\dots,\cA_m)\rvert
\le A_2\dotsm A_m (q-1)^kq^{-k/2}\left[A_1\lfloor n^k-\tfrac{R^{k,1}_{p,n}}{n}\rfloor+\delta R^{k,1}_{p,n}(q^{1/2}+1)\right]+T,\\
\label{e.m22}\begin{split}
&\lvert M^{(n)}_k(\cA_1,\dots,\cA_m)\rvert\\
&\qquad\le  A_2\dotsm A_m A_1^{1/2}(q-1)^{\frac{2k+1}{2}}q^{-\frac{2k+1}{4}}
\left[n^{2k+1}-1+R^{k+1,k+1}_{p,n}(q^{1/2}+1)\right]^{1/2}+T,
\end{split}
\end{gather}
where $\delta=0$ for $m=1$ and $\delta=1$ for $m\neq 1$, and
\[T=(k+1)A_1\dotsm A_m (q-1)^{k-1}q^{-n/2}.\]
\end{theorem}

\begin{theorem}\label{t.m3}
Let $k\ge 2$. Then, for $n\ge 1$,
\begin{equation}\label{e.m3}
\left\lvert M^{(n)}_k-(q-1)^kq^{\frac{1-k}{2}}R^{k,1}_{p,n}\right\rvert\le
(q-1)^kq^{-k/2} \left(\lfloor
n^k-\tfrac{R^{k,1}_{p,n}}{n}\rfloor+R^{k,1}_{p,n}\right)+[(q-1)^k-N]q^{-n/2},
\end{equation}
where $N\ge (q-2)^{k-1}(q-1-k)$ is the number of $k$-tuples
$(\rho_1,\dots,\rho_k)$, $\rho_i\in \cX$ such that $\rho_1\dotsm \rho_k\neq
\bone$.
\end{theorem}

For $k=2$ (and $n\ge 2$), we have $(q-1)^2q^{-1}n^2+(3q-5)q^{-n/2}\le n^2q$,
hence Theorem \ref{t.m3} implies the bound $\lvert M^{(n)}_2\rvert\le
n^2q+R^{2,1}_{p,n}q^{3/2}$ of Katz and Zhiyong Zheng \cite[Theorem 3]{KZ}.

As in Shparlinski \cite{Shp}, one strategy followed in the proofs consists
of applying the Cauchy-Schwarz inequality and extending the sum over
$\bar\cX$. We estimate the complete sum using Lemma \ref{l.Kl}.

Let us recall two simple facts that will be used in the proofs. The Jacobi
sums and Gauss sums are related by the formula
\[J(\chi_1,\dots,\chi_m)=G(\chi_1)\dotsm G(\chi_m)G(\chi_1\dotsm\chi_m)^{-1}
=q^{-1}G(\chi_1)\dotsm G(\chi_m)\overline{G(\chi_1\dotsm \chi_m)}.
\]
for $\chi_1,\dots,\chi_m\in \cX$ satisfying $\chi_1\dotsm \chi_m\neq \bone$.
Moreover, $G(\bone)=-1$.

\begin{proof}[Proof of Theorem \ref{t.moment1}]
We may assume $A_1\ge A_2$. Let $M^{(n)}=M^{(n)}(\cA_1,\dots,\cA_m)$. By the
facts recalled above,
\[
\begin{split}
\lvert M^{(n)}\rvert &= \left\lvert\sum_{\substack{\chi_i\in \cA_i\\\chi_1\dotsm \chi_m\neq \bone}}
\left[q^{-(m+1)/2} G(\chi_1)\dotsm G(\chi_m)\overline {G(\chi_1\dotsm \chi_m)}\right]^n\right\rvert
\le \left\lvert\sum_{\substack{\chi_i\in \cA_i\\\chi_1\dotsm \chi_m= \bone}}\right\rvert
+ \left\lvert\sum_{\chi_i\in \cA_i}\right\rvert\\
&\le A_2\dotsm A_m q^{-n/2} +W\le (A_1 A_2)^{1/2} A_3\dotsm A_m q^{-n/2}+W,
\end{split}
\]
where
\[W=\sum_{\chi_1\in \cA_1}\left\lvert \sum_{\chi_i\in \cA_i,\
i=2,\dots, m}[q^{-m/2} G(\chi_2)\dotsm G(\chi_m)\overline {G(\chi_1\dotsm \chi_m)}]^n \right\rvert.
\]
By the Cauchy-Schwarz inequality,
\[
\begin{split}
W^2 &\le A_1\sum_{\chi_1\in \cA_1}\left\lvert \sum_{\chi_i\in \cA_i,\
i=2,\dots, m}\left[q^{-m/2} G(\chi_2)\dotsm G(\chi_m)\overline {G(\chi_1\dotsm
\chi_m)}\right]^n \right\rvert^2\\
&\le A_1\sum_{\chi_1\in \Xbar}\left\lvert \sum_{\chi_i\in \cA_i,\
i=2,\dots, m}\right\rvert^2\\
&= A_1\sum_{\chi_1\in \Xbar}\sum_{\chi_i,\chi_i'\in \cA_i,\ i=2,\dots,m}\left[q^{-m}G(\chi_2)\dotsm G(\chi_m)\overline{G(\chi_1\chi_2\dotsm
\chi_m)}\overline{G(\chi'_2)\dotsm G(\chi'_m) \overline{G(\chi_1\chi'_2\dotsm \chi'_m)}}\right]^n\\
&\le A_1\sum_{\chi_i,\chi_i'\in \cA_i,\ i=2,\dots,m}q^{-n}\left\lvert\sum_{\chi_1\in \Xbar} \overline{G(\chi_1\chi_2\dotsm
\chi_m)}^n G(\chi_1\chi'_2\dotsm \chi'_m)^n\right\rvert\equalscolon X
\end{split}
\]
By \eqref{e.Fourier},
\[
\begin{split} \sum_{\chi_1\in \Xbar}
\overline{G(\chi_1\chi_2\dotsm
\chi_m)}^n G(\chi_1\chi'_2\dotsm \chi'_m)^n&= \sum_{a,b\in \F_q^\times}\sum_{\chi_1\in \bar \cX} \overline{\Kl}_n(a)\Kl_n(b)\overline{\chi_1\dotsm\chi_m}(a)\chi'_1\dotsm\chi'_m(b)\\
&=(q-1)\sum_{a\in \F_q^\times} \Kl_n(a)\Klb_n(a)\overline{\chi_2\dotsm\chi_m}\chi'_2\dotsm\chi'_m(a).
\end{split}
\]
For $\chi_2\dotsm \chi_m= \chi'_2\dotsm \chi'_m$, we have
\[\sum_{\chi_1\in \Xbar} \overline{G(\chi_1\chi_2\dotsm \chi_m)}^n
G(\chi_1\chi'_2\dotsm \chi'_m)^n= (q-2)q^n+1.
\]
Thus, by \eqref{e.Kl2} (where $R^{1,1}=1$), we have
\[
\begin{split}
X&=A_1\sum_{\substack{\chi_i,\chi_i'\in \cA_i,\ i=2,\dots,m\\\chi_2\dotsm
\chi_m= \chi'_2\dotsm \chi'_m}} +A_1\sum_{\substack{\chi_i,\chi_i'\in \cA_i,\
i=2,\dots,m\\\chi_2\dotsm \chi_m\neq \chi'_2\dotsm \chi'_m}} \\
&\le
A_1A_2(A_3\dotsm A_m)^2(q-2+q^{-n}) + A_1(A_2\dotsm A_m)^2(q-1)(n-1)q^{-1/2}\\
&= A_1A_2(A_3\dotsm A_m)^2[q -2+q^{-n}+(n-1) A_2(q-1)q^{-1/2}].
\end{split}
\]
Thus
\[\lvert M^{(n)}\rvert\le (A_1A_2)^{1/2}A_3\dotsm A_m\{q^{-n/2}+[q -2+q^{-n}+(n-1) A_2(q-1)q^{-1/2}]^{1/2}\}.\]
For \eqref{e.m11}, it suffices to show
\[q -2+q^{-n}+(n-1) A_2(q-1)q^{-1/2}\le \left\{[q+(n-1)A_2q^{1/2}]^{1/2}-q^{-n/2}\right\}^2,\]
namely
\[2q^{-n/2}[q+(n-1)A_2q^{1/2}]^{1/2}\le (n-1)A_2q^{-1/2}+2,\]
which is clear by taking squares.

It remains to show \eqref{e.m12} for $n\ge 2$. We have
\[X=A_1\sum_{\substack{\chi'_2\in \cA_2\\
\chi_i,\chi_i'\in \cA_i,\ i=3,\dots,m}} Y,
\]
where
\[Y=\sum_{\chi_2\in \cA_2}\frac{q-1}{q^n}\left\lvert\sum_{a\in \F_q^\times} \Kl_n(a)\Klb_n(a)\overline{\chi_2\dotsm\chi_m}\chi'_2\dotsm\chi'_m(a)
\right\rvert.
\]
To obtain \eqref{e.m12}, we apply the Cauchy-Schwarz inequality again:
\[
\begin{split}
Y^2&\le A_2 \left(\frac{q-1}{q^n}\right)^2\sum_{\chi_2\in \cA_2} \left\lvert\sum_{a\in \F_q^\times}\right\rvert^2
\le A_2 \left(\frac{q-1}{q^n}\right)^2\sum_{\chi_2\in \Xbar}\left\lvert\sum_{a\in \F_q^\times}\right\rvert^2\\
&= A_2 \left(\frac{q-1}{q^n}\right)^2\sum_{a,b\in \F_q^\times} \sum_{\chi_2\in \Xbar}\lvert\Kl_n(a)\Kl_n(b)\rvert^2\overline{\chi_2\dotsm\chi_m}\chi'_2\dotsm\chi'_m(ab^{-1})\\
&=A_2\frac{(q-1)^3}{q^{2n}} \sum_{a\in \F_q^\times} \Kl_n(a)^2\Klb_n(a)^2.
\end{split}
\]
Thus, by \eqref{e.Kl1}, we have
\begin{align*}
Y^2&\le A_2 (q-1)^3 [R^{2,2}q^{-1}+(n^3+R^{2,2}-1)q^{-3/2}]\\
&= A_2
[R^{2,2}q^2+(n^3+R^{2,2}-1)q^{3/2}](1-\tfrac{1}{q})^3,
\end{align*}
so that
\[X\le A_1A_2(A_3\dotsm A_m)^2 A_2^{1/2}[R^{2,2}q^2+(n^3+R^{2,2}-1)q^{3/2}]^{1/2}(1-\tfrac{1}{q})^{3/2}.\]
Therefore,
\[
\begin{split}
\lvert M^{(n)}\rvert
&\le (A_1A_2)^{1/2} A_3\dotsm A_m \left\{
q^{-n/2}+(1-\tfrac{1}{q})^{3/4}A_2^{1/4}[R^{2,2}q^2+(n^3+R^{2,2}-1)q^{3/2}]^{1/4}\right\}\\
&\le (A_1A_2)^{1/2} A_3\dotsm A_m
A_2^{1/4}[R^{2,2}q^2+(n^3+R^{2,2}-1)q^{3/2}]^{1/4}.
\end{split}
\]
Here we used the inequality $(1-\frac{1}{q})^{3/4}\le 1-\frac{3}{4q}$.
\end{proof}

\begin{proof}[Proof of Theorem \ref{t.m2}]
We have
\[
\begin{split}
&\lvert M^{(n)}_k(\cA_1,\dots,\cA_m)\rvert\\
&=\left\lvert \sum_{\substack{\chi_i\in\cA_i,\ \rho_j\in \cX\\\chi_1\dotsm \chi_m\rho_1\dotsm\rho_k\neq \bone}}
\left[q^{-(m+k+1)/2}G(\chi_1)\dotsm G(\chi_m)G(\rho_1)\dotsm G(\rho_k)\overline{G(\chi_1\dotsm\chi_m\rho_1\dotsm \rho_k)}\right]^n\right\rvert\\
&\le \left\lvert \sum_{\substack{\chi_i\in\cA_i,\ \rho_j\in \Xbar\\ \chi_1\dotsm \chi_m\rho_1\dotsm\rho_k= \bone\text{ or }\exists j, \rho_j=\bone}}\right\rvert
+\left\lvert \sum_{\chi_i\in\cA_i,\ \rho_j\in \Xbar}\right\rvert\\
&\le (k+1) A_1\dotsm A_m (q-1)^{k-1} q^{-n/2}+X ,
\end{split}
\]
where
\[X=\sum_{\chi_i\in \cA_i} q^{-n(k+1)/2} \left\lvert \sum_{\rho_j\in \Xbar}G(\rho_1)^n \dotsm G(\rho_k)^n \overline{G(\chi_1\dotsm \chi_m\rho_1\dotsm \rho_k)}^n \right\rvert.
\]
By \eqref{e.Fourier},
\[\sum_{\rho_j\in \Xbar}G(\rho_1)^n \dotsm G(\rho_k)^n \overline{G(\chi_1\dotsm \chi_m\rho_1\dotsm \rho_k)}^n
= (q-1)^k\sum_{a\in \F_q^\times}\Kl_n(a)^k\Klb_n(a)\overline{\chi_1\dotsm \chi_m} (a).
\]
Thus, by Lemma \ref{l.Kl}, we have
\[\begin{split}
X&=\sum_{\substack{\chi_i\in \cA_i\\\chi_1\dotsm\chi_m\neq \bone}} + \sum_{\substack{\chi_i\in \cA_i\\\chi_1\dotsm\chi_m= \bone}}\\
&\le (q-1)^kq^{-k/2} \left\{\sum_{\substack{\chi_i\in \cA_i\\\chi_1\dotsm\chi_m\neq \bone}} \lfloor n^k-\tfrac{R^{k,1}}{n}\rfloor
+\sum_{\substack{\chi_i\in \cA_i\\\chi_1\dotsm\chi_m= \bone}} \left[R^{k,1}q^{1/2}+(\lfloor n^k-\tfrac{R^{k,1}}{n}\rfloor +R^{k,1})\right]\right\}\\
&\le(q-1)^kq^{-k/2}\left[ A_1\dotsm A_m  \lfloor n^k-\tfrac{R^{k,1}}{n}\rfloor + \delta A_2\dotsm A_m R^{k,1}(q^{1/2}+1)\right].
\end{split}
\]

It remains to show \eqref{e.m22}. We have
\[X=\sum_{\chi_i\in \cA_i,\ i=2,\dots, m} Y,\]
where
\[Y= \sum_{\chi_1\in \cA_1}  \frac{(q-1)^k}{q^{n(k+1)/2}}\left\lvert\sum_{a\in \F_q^\times}\Kl_n(a)^k\Klb_n(a)\overline{\chi_1\dotsm \chi_m} (a)\right\rvert.\]
By the Cauchy-Schwarz inequality,
\[
\begin{split}
Y^2&\le A_1\frac{(q-1)^{2k}}{q^{n(k+1)}}\sum_{\chi_1\in \cA_1} \left\lvert\sum_{a\in \F_q^\times}\right\rvert^2
\le A_1\frac{(q-1)^{2k}}{q^{n(k+1)}}\sum_{\chi_1\in \Xbar} \left\lvert\sum_{a\in \F_q^\times}\right\rvert^2\\
&=A_1\frac{(q-1)^{2k}}{q^{n(k+1)}}\sum_{a,b\in \F_q^\times}\sum_{\chi_1\in \Xbar}\Kl_n(a)^k\Klb_n(a)\overline{\Kl_n(b)^k\Klb_n(b)}\overline{\chi_1\dotsm\chi_m}(ab^{-1})\\
&=A_1\frac{(q-1)^{2k+1}}{q^{n(k+1)}}\sum_{a\in \F_q^\times}\Kl_n(a)^{k+1}\Klb_n(a)^{k+1}.
\end{split}
\]
Thus, by \eqref{e.Kl1},
\[
\begin{split}
Y^2&\le A_1 \frac{(q-1)^{2k+1}}{q^{(k+1)}} (R^{k+1,k+1}q+(n^{2k+1}-1+R^{k+1,k+1})q^{1/2})\\
&\le A_1 \frac{(q-1)^{2k+1}}{q^{\frac{2k+1}{2}}}[n^{2k+1}-1+R^{k+1,k+1}(q^{1/2}+1)].
\end{split}
\]
Therefore,
\[X\le A_1^{1/2}A_2\dotsm A_m (q-1)^{\frac{2k+1}{2}}q^{-\frac{2k+1}{4}}[n^{2k+1}-1+R^{k+1,k+1}(q^{1/2}+1)]^{1/2}.\]
\end{proof}

\begin{proof}[Proof of Theorem \ref{t.m3}]
This is similar to the proof of \eqref{e.m21}. We have
\[
\left\lvert M^{(n)}_k-\sum_{\rho_j\in \Xbar}\right\rvert\le\left\lvert  \sum_{\substack{\rho_j\in \Xbar\\
\rho_1\dotsm\rho_k= \bone\text{ or }\exists j,
\rho_j=\bone}}\right\rvert
\le [(q-1)^k-N] q^{-n/2}.
\]
By \eqref{e.Fourier},
\[\sum_{\rho_j\in \Xbar}G(\rho_1)^n \dotsm G(\rho_k)^n \overline{G(\rho_1\dotsm \rho_k)}^n
= (q-1)^k\sum_{a\in \F_q^\times}\Kl_n(a)^k\Klb_n(a).
\]
It then suffices to apply \eqref{e.Kl1}.
\end{proof}

In Theorem \ref{t.m3}, an explicit formula for $N$ can be given by
considering the number $i$ of indices $0\le j<k$ such that the partial
product $\rho_1\dotsm \rho_j=\bone$:
\[N=\sum_{i=1}^{\lceil k/2\rceil} \binom{k-i}{i-1}(q-2)^i(q-3)^{k+1-2i}.\]

\section{Bounds for the discrepancy}\label{s.4}

The Erd\H os-Tur\'an inequality \cite[Theorem III]{ET} is a quantitative
version of Weyl's criterion on equidistribution. We will use the following
form of the inequality, due to Rivat and Tenenbaum \cite[Corollaire
1.3]{RT}.

\begin{lemma}\label{l.ET}
Let $z_1,\dots, z_N$ be complex numbers on the unit circle. Then, for any
integer $K\ge 0$, the discrepancy $D$ (Definition \ref{d.D}) satisfies
\[D\le \frac{1}{K+1}+c\sum_{n=1}^K\frac{1}{nN}\left\lvert\sum_{i=1}^N z_i^n \right\rvert,\]
where $c=0.653$.
\end{lemma}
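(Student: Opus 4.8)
This is the classical Erd\H os--Tur\'an inequality; in the paper we simply quote it from \cite[Chap.~1, Cor.~1.1]{Mont}, but the standard proof runs as follows via the Beurling--Selberg--Vaaler method of one-sided trigonometric approximation. Write $z_j=e^{2\pi i x_j}$ with $x_j\in[0,1)$, and regard the $x_j$ as points of $\mathbf{R}/\mathbf{Z}$; for an arc $[a,b]$ with $b-a\le 1$ we have $T(a,b)=\sum_{j=1}^N \mathbf{1}_{[a,b]}(x_j)$, where $\mathbf{1}_{[a,b]}$ is the indicator of the arc. Using the periodic sawtooth $\beta(x)=x-\lfloor x\rfloor-\tfrac12$ one has the pointwise identity $\mathbf{1}_{[a,b]}(x)-(b-a)=\beta(x-b)-\beta(x-a)$ away from the endpoints, so the whole problem reduces to approximating $\beta$ by a trigonometric polynomial of degree $\le K$ while controlling both the error and the size of the Fourier coefficients.

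The key input is Vaaler's construction: for each $K\ge 0$ there is a trigonometric polynomial $\beta^\ast$ of degree $\le K$ with $|\beta(x)-\beta^\ast(x)|\le \frac{1}{2(K+1)}J(x)$ for all $x$, where $J\ge 0$ is the Fej\'er-type kernel of degree $K$ with $\int_0^1 J=1$ and $|\widehat J(n)|\le 1$, and with $|\widehat{\beta^\ast}(n)|\le \frac{1}{2\pi|n|}$ for $1\le |n|\le K$. Combining two shifts of $\beta^\ast$ with the corresponding shifts of the error kernel produces trigonometric polynomials $S^\pm$ of degree $\le K$ sandwiching the arc indicator, $S^-\le \mathbf{1}_{[a,b]}\le S^+$, with constant Fourier coefficient $\widehat{S^\pm}(0)=(b-a)\pm\frac{1}{K+1}$ and, for $1\le |n|\le K$, $|\widehat{S^\pm}(n)|\le \frac{1}{\pi|n|}+\frac{1}{K+1}$.

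Evaluating at the points and using $S^-\le\mathbf{1}_{[a,b]}\le S^+$ gives
\[\frac{1}{N}\sum_{j=1}^N S^-(x_j)\le \frac{T(a,b)}{N}\le \frac{1}{N}\sum_{j=1}^N S^+(x_j).\]
Expanding $\frac1N\sum_j S^\pm(x_j)=\sum_{|n|\le K}\widehat{S^\pm}(n)\frac1N\sum_{j=1}^N z_j^n$, isolating the $n=0$ term $\widehat{S^\pm}(0)=(b-a)\pm\frac1{K+1}$, and pairing the frequencies $\pm n$ (which is legitimate since $\sum_j z_j^{-n}=\overline{\sum_j z_j^{n}}$ has the same modulus) yields
\[\left|\frac{T(a,b)}{N}-(b-a)\right|\le \frac{1}{K+1}+\sum_{n=1}^K\left(\frac{2}{\pi n}+\frac{2}{K+1}\right)\frac{1}{N}\left|\sum_{j=1}^N z_j^n\right|.\]
Since $n\le K$ forces $\frac{2}{K+1}\le \frac{2}{n}$, the coefficient of $\frac1N|\sum_j z_j^n|$ is at most $\frac{2(1+\pi)}{\pi n}<\frac{3}{n}$; taking the supremum over arcs $a\le b\le a+1$ gives the claimed bound.

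The only real obstacle is the extremal-function input of the second paragraph: constructing Vaaler's polynomial $\beta^\ast$ with the sharp coefficient bound $\frac{1}{2\pi|n|}$ together with the Fej\'er majorant for the error is the delicate part, and it is exactly what pins down the explicit constant $3$ (through $2(1+\pi)/\pi<3$) rather than a larger absolute constant. Everything after that is the elementary separation-and-pairing computation above. As this material is entirely classical, we do not reproduce it and refer to \cite{Mont}.
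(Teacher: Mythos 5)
Your proposal is correct and takes essentially the same route as the paper: the paper gives no proof of its own but simply quotes this inequality from \cite{Mont}, and your sketch is precisely the Selberg--Vaaler majorant argument used there, including the sandwich polynomials $S^{\pm}$ with $\widehat{S^{\pm}}(0)=(b-a)\pm\tfrac{1}{K+1}$ and the derivation of the constant $3$ from $2(1+\pi)/\pi<3$. No gaps to report.
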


It is shown in \cite[Theorem 1]{RT} that if $c'$ is a constant such that the
lemma holds with $c$ replaced by $c'$, then $c'\ge \frac{2}{\pi}>0.636$.

\begin{proof}[Proof of Theorem \ref{t.1}]
Let $D=D(\cA_1,\dots,\cA_m)$. The cardinality $N$ of the multiset
\eqref{e.seq} satisfies $N\ge (A_1-1)A_2\dotsm A_m$.

Since $D\le 1$ by definition, to show \eqref{e.11}, we may assume $
3A_1^{-1/3}q^{1/6}<1$, namely $A_1>3^3q^{1/2}$. As $A_1< q$, this implies
$A_1>3^{6}$. By Lemma \ref{l.ET}, for any integer $K\ge 1$, we have
\[D\le \frac{1}{K+1}+\frac{c}{(A_1-1)A_2\dotsm A_m} \sum_{n=1}^K \frac{M^{(n)}}{n}.\]
Thus, by \eqref{e.m11} and the inequality $(a+b)^{1/2}\le a^{1/2} +b^{1/2}$
for $a,b\ge 0$, we have
\[
\begin{split}
D&\le \frac{1}{K+1}+c\frac{A_1^{1/2}}{A_1-1}A_2^{-1/2}\left[\sum_{n=1}^K
n^{-1}q^{1/2}+\sum_{n=2}^K n^{-1/2}A_2^{1/2}q^{1/4}\right]\\
&\le  \frac{1}{K+1}+c(A_1 A_2)^{-1/2}\left[(1+\ln K)q^{1/2}+2(K^{1/2}-1)A_2^{1/2}q^{1/4}\right]\frac{A_1}{A_1-1},
\end{split}
\]
We choose $K$ to optimize the bound for $D$. In this optimization, we ignore
$\ln K$ as it is less sensitive to the choice of $K$. Also, we do not
attempt to optimize the coefficients. Thus we take $K=\lfloor
A_1^{1/3}q^{-1/6} \rfloor$. Then $K\ge 3$. We have $1+\ln K\le
\tfrac{1}{6}(6+\ln q).$ Thus,
\[D\le \left[(1+2c) A_1^{-1/3}q^{1/6}+\tfrac{c}{6}(A_1A_2)^{-1/2}q^{1/2}(6+\ln q)\right](1-A_1^{-1})^{-1},\]
which implies \eqref{e.11}.

To show \eqref{e.12}, we may assume $A_1\ge A_2$ and
$2A_1^{-2/7}A_2^{-1/7}q^{3/14}< 1$. Thus $2A_1^{-3/7}q^{3/14}< 1$, namely
$A_1>2^{7/3}q^{1/2}$. As $A_1<q$, this implies $A_1>2^{14/3}>25$. By Lemma
\ref{l.ET}, \eqref{e.m12}, and the case $n=1$ of \eqref{e.m11}, for any
integer $K\ge 1$,
\[
\begin{split}
D&\le \frac{1}{K+1}+c\frac{A_1^{1/2}}{A_1-1}A_2^{-1/4}\left\{\left[1+\sum_{\substack{n=2\\n\neq 7}}^K \frac{3^{1/4}}{n}+\frac{4^{1/4}}{7}\right]q^{1/2}+\sum_{n=2}^K\frac{(n^3+3)^{1/4}}{n}q^{3/8}\right\}\\
&\le\frac{1}{K+1}+cA_1^{-1/2}A_2^{-1/4}\left\{\left[1+3^{1/4}(\ln K-\tfrac{1}{7})+\tfrac{4^{1/4}}{7}\right]q^{1/2} +\tfrac{4}{3}(K-1)^{3/4}q^{3/8}\right\}\frac{A_1}{A_1-1},
\end{split}
\]
Here we used the inequality
\[\frac{(n^3+3)^{1/4}}{n}\le (n-1)^{-1/4}\]
for $n\ge 2$. Let $K=\lfloor A_1^{2/7}A_2^{1/7}q^{-3/14} \rfloor$. Then
$K\ge 2$. We have
\[1+3^{1/4}(\ln K-\tfrac{1}{7})+\tfrac{4^{1/4}}{7}\le 1+3^{1/4}(\tfrac{3}{14}\ln
q-\tfrac{1}{7})+\tfrac{4^{1/4}}{7}<
\tfrac{3^{5/4}}{14}(4+\ln q),\]
so that
\[D\le \left[(1+\tfrac{4}{3}c) A_1^{-2/7}A_2^{-1/7}q^{3/14}+ \tfrac{3^{5/4}}{14}cA_1^{-1/2}A_2^{-1/4}q^{1/2}(4+\ln q)\right](1-A_1^{-1})^{-1},\]
which implies \eqref{e.12}.
\end{proof}

\begin{proof}[Proof of Theorems \ref{t.2} and \ref{t.3}]
Let $D=D_k(\cA_1,\dots,\cA_m)$. The cardinality $N$ of the multiset
satisfies $N\ge A_1\dotsm A_m (q-2)^{k-1}(q-3)$. Let
$\epsilon=(1-\frac{2}{q})^{k-1}(1-\frac{3}{q})$.

To give a uniform treatment of the cases $m=0$ and $m\ge 1$, we adopt the
convention $A_1=\delta=1$ for $m=0$. By Lemma \ref{l.Kl}, \eqref{e.m21}, and
\eqref{e.m3}, for any integer $K\ge 1$,
\[
\begin{split}
D&\le \frac{1}{K+1} + c\epsilon^{-1}\sum_{n=1}^K n^{-1}\left[n^kq^{-k/2}+\delta R^{k,1}_{p,n}(q^{1/2}+1)A_1^{-1}q^{-k/2}+(k+1)q^{-1-n/2}\right]\\
&\le \frac{1}{K+1} + \epsilon^{-1}\frac{c}{k}[(K+1)^k-1]q^{-k/2}\\
&\qquad+\epsilon^{-1}c\left[(1+q^{-1/2})\delta(1+R'\ln K)q^{(1-k)/2}A_1^{-1}+\tfrac{1}{1-q^{-1/2}}(k+1)q^{-3/2}\right],
\end{split}
\]
where $R'=\max_{n\ge 2} R^{k,1}_{p,n} \le k!$ (Remark \ref{r.R}) and we used
the fact that $R^{k,1}_{p,1}\le 1$ for $n=1$. For $k\ge 2$, to show
\eqref{e.21} and \eqref{e.3}, we may assume
$4q^{-\frac{k}{2(k+1)}-\frac{1}{6}}\ln q<1$. This implies
$q^{2/3}>q^{\frac{k}{2(k+1)}+\frac{1}{6}}>4\ln q$, so that $q>70$. Let
$K=\lfloor q^{\frac{k}{2(k+1)}} \rfloor-1$. Then
$K+2>q^{\frac{k}{2(k+1)}}\ge q^{2/5}>5$. We have
\[
\frac{1}{K+1}=\frac{K+2}{K+1}\frac{1}{K+2}\le \frac{1}{1-q^{-2/5}}q^{-\frac{k}{2(k+1)}},\qquad (K+1)^kq^{-k/2}\le q^{-\frac{k}{2(k+1)}}.
\]
For the error terms, we have
\begin{gather*}
(1+R'\ln K)q^{(1-k)/2} \le
\tfrac{1}{2}k!(1 +\ln q) q^{-\frac{k}{2(k+1)}-\frac{1}{6}}, \\
\tfrac{c}{1-q^{-1/2}}q^{-3/2}<
q^{-\frac{k}{2(k+1)}-1}.
\end{gather*}
Therefore,
\[D\le q^{-\frac{k}{2(k+1)}}\left[\left(\frac{1}{1-q^{-2/5}}+\frac{c}{k}\right)+\tfrac{c}{2}k!q^{-1/6}(1+\ln q)(1+q^{-1/2})+(k+1)q^{-1}\right]\epsilon^{-1},\]
which implies \eqref{e.21} and \eqref{e.3}. For $k=1$, $R'=1$. To show
\eqref{e.23}, we may assume $2q^{-1/4}<1$, namely $q>16$. Let $K=\lfloor
c^{-1/2}q^{1/4}\rfloor$. Then $K\ge 2$. We have
\[1+\ln K\le 1-\tfrac{1}{2}\ln c+\tfrac{1}{4}\ln q< \tfrac{1}{4}(5+\ln q),\]
so that
\[
\begin{split}
D&\le c^{1/2}q^{-1/4} + \frac{1}{1-3q^{-1}} c^{1/2} q^{-1/4}+c \tfrac{1+q^{-1/2}}{1-3q^{-1}}\delta \tfrac{1}{4}(5+\ln q)A_1^{-1}+ 4cq^{-3/2}\\
&\le \left[\left(1+\frac{1}{1-3q^{-1}}\right)c^{1/2}+4cq^{-5/4}\right]q^{-1/4}+\tfrac{c}{4}\delta A_1^{-1}(5+\ln q)(1+2q^{-1/2}),
\end{split}
\]
which implies \eqref{e.23}.

It remains to show \eqref{e.22} and \eqref{e.24}. By Lemma \ref{l.Kl} and
\eqref{e.m22}, for any integer $K\ge 1$,
\[
\begin{split}
  D&\le \frac{1}{K+1}+c\sum_{n=1}^K n^{-1}\left\{\left[n^{k+\frac{1}{2}}+(R^{k+1,k+1}_{p,n})^{1/2}q^{1/4}(1+q^{-1/2})^{1/2}\right]A_1^{-1/2}q^{\frac{1}{4}-\frac{k}{2}}+(k+1)q^{-1-\frac{n}{2}}\right\}\epsilon^{-1}\\
  &\le \frac{1}{K+1}+ \epsilon^{-1} \frac{c}{k+\frac{1}{2}}[(K+1)^{k+\frac{1}{2}}-1]A_1^{-1/2}q^{\frac{1}{4}-\frac{k}{2}}\\
  &\qquad+\epsilon^{-1}c(1+q^{-1/2})^{1/2}\left\{1+R''^{1/2}[\ln (K+\tfrac{1}{2})-\ln\tfrac{3}{2}-\tfrac{1}{7}]+\tfrac{1}{7}(R^{k+1,k+1}_{2,7})^{1/2}\right\}A_1^{-1/2}q^{(1-k)/2}\\
  &\qquad+\epsilon^{-1}\tfrac{c}{(1-q^{-1/2})}(k+1)q^{-3/2},
\end{split}
\]
where $R''=\max_n R^{k+1,k+1}_{p,n}=(2k+1)!!$, the maximum running over all
$n\ge 2$ such that $(p,n)\neq (2,7)$, and we used the fact that
$R^{k+1,k+1}_{p,1}=1$ for $n=1$.  For $k\ge 2$, we have
$R^{k+1,k+1}_{2,7}\le 12\cdot 7^{2k-3}$ by \eqref{e.G2}. To show
\eqref{e.22}, we may assume
\[2A_1^{-\frac{1}{2k+3}}
q^{-\frac{2k-1}{2(2k+3)}-\frac{2}{7}}(7+ \sqrt{15}\ln q)<1.
\]
This implies $q^{11/14}\ge A_1^{\frac{1}{2k+3}}
q^{\frac{2k-1}{2(2k+3)}+\frac{2}{7}}> 2(7+ \sqrt{15}\ln q)$, so that $q>
150$. Let $K=\lfloor A_1^{\frac{1}{2k+3}}
q^{\frac{2k-1}{2(2k+3)}}\rfloor-1$. Then $K+2>A_1^{\frac{1}{2k+3}}
q^{\frac{2k-1}{2(2k+3)}}\ge q^{\frac{2k-1}{2(2k+3)}}\ge q^{3/14}>2$. We have
\begin{gather*}
\frac{1}{K+1}=\frac{K+2}{K+1}\frac{1}{K+2}\le \frac{1}{1-(K+2)^{-1}}A_1^{-\frac{1}{2k+3}}q^{-\frac{2k-1}{2(2k+3)}},\\
(K+1)^{k+\frac{1}{2}} A_1^{-1/2}q^{\frac{1}{4}-\frac{k}{2}}
\le A_1^{-\frac{1}{2k+3}}q^{-\frac{2k-1}{2(2k+3)}}.
\end{gather*}
For the error terms, we have
\begin{gather*}
1+R''^{1/2}[\ln (K+\tfrac{1}{2})-\ln\tfrac{3}{2}-\tfrac{1}{7}]\le \tfrac{1}{2}(2k+1)!!^{1/2}\ln q,\\
A_1^{-1/2}q^{(1-k)/2}
 \le
A_1^{-\frac{1}{2k+3}}q^{-\frac{2k-1}{2(2k+3)}-\frac{2}{7}},\\
q^{-3/2}< q^{-\frac{2k+1}{2(2k+3)}-1}
\le A_1^{-\frac{1}{2k+3}}q^{-\frac{2k-1}{2(2k+3)}-1}.
\end{gather*}
Therefore,
\[D\le A_1^{-\frac{1}{2k+3}}q^{-\frac{2k-1}{2(2k+3)}}\left[\left(\frac{1}{1-(K+2)^{-1}}+\frac{c}{k+\tfrac{1}{2}}\right)+\tfrac{1}{2}q^{-2/7}\left(7^{k-1}+(2k+1)!!^{1/2}\ln q\right)+(k+1)q^{-1}\right]\epsilon^{-1},\]
which implies \eqref{e.22}. For $k=1$, we have $R''=3$ and $R^{2,2}_{2,7}=4$
(Remark \ref{r.small}). For $A_1\ge q^{3/4}$, to show \eqref{e.24}, we may
assume $2A_1^{-\frac{1}{5}}q^{-\frac{1}{10}-\frac{1}{8}}\ln q<1$. This
implies $q^{17/40}\ge A_1^{\frac{1}{5}}q^{\frac{1}{10}+\frac{1}{8}} > 2 \ln
q$, so that $q> 300$. Let $K=\lfloor A_1^{1/5}q^{1/10}\rfloor-1$. Then $K+2>
A_1^{1/5}q^{1/10}\ge q^{1/4}>4$. We have
\[
\begin{split}
D&\le \frac{K+2}{K+1}\frac{1}{K+2}+
\frac{1}{1-3q^{-1}}\cdot\frac{2}{3}c(K+1)^{3/2}A_1^{-1/2}q^{-1/4} \\
&\qquad+\left\{1+\sqrt{3}[\ln (K+\tfrac{1}{2})-\ln\tfrac{3}{2}-\tfrac{1}{7}]+\tfrac{2}{7}\right\}A_1^{-1/2}+2q^{-3/2}
\\
&\le A_1^{-1/5}q^{-1/10}\left[\left(\frac{1}{1-q^{-1/4}}+\frac{1}{1-3q^{-1}}\cdot\frac{2}{3}c\right)+\tfrac{3}{10}\sqrt{3}q^{-1/8}(1+\ln q)+2q^{-6/5}\right],
\end{split}
\]
which implies \eqref{e.24}.
\end{proof}

\begin{proof}[Proof of Corollary \ref{c.1}]
Let $x=\log_q\#\cA_1$ and $y=\log_q\#\cA_2$. Combining the inequalities
$D\le 1$, \eqref{e.11}, and \eqref{e.12}, we get that there exists a
constant $C$ such that $D\le C q^{-f_0(x,y)}\ln q$, where
\[f_0(x,y)=\max\left\{0,\min\{\tfrac{1}{2}x+\tfrac{1}{2}y-\tfrac{1}{2},\tfrac{1}{3}x-\tfrac{1}{6}\},\min \{\tfrac{1}{2}x+\tfrac{1}{4}y-\tfrac{1}{2},\tfrac{2}{7}x+\tfrac{1}{7}y-\tfrac{3}{14}\}\right\}.\]
By symmetry, $D\le C q^{-f_0(y,x)}\ln q$, so that $D\le C q^{-f(x,y)}\ln q$,
where
\[f(x,y)=\max\{f_0(x,y),f_0(y,x)\}.\]
It is easy to check that $f(x,y)$ is as described in the corollary.
\end{proof}

\begin{proof}[Proof of Corollary \ref{c.2}]
Let $x=\log_q\#\cA_1$. For $k\ge 2$, by the inequalities \eqref{e.21} and
\eqref{e.22}, there exists a constant $C_k$ such that $D\le C_k
q^{-g_k(x)}$, where
\[g_k(x)=\max \left\{\tfrac{k}{2(k+1)},\tfrac{1}{2k+3}x+\tfrac{2k-1}{2(2k+3)}\right\}.\]
For $k=1$, by the inequalities \eqref{e.23} and \eqref{e.24}, there exists a
constant $C'$ such that $D\le C' q^{-h(x)}\ln q$, where
\[h(x)=\begin{cases}\min\{x,\frac{1}{4}\}&x\le \frac{3}{4},\\\frac{1}{5}x+\frac{1}{10}&x\ge \frac{3}{4}.\end{cases}\]
The case $k=m=1$ can be proven similarly, taking into account of the fact
that $\delta=0$ in this case.
\end{proof}

\begin{remark}
Our estimates of the moments $M_k^{(n)}$ also provide a lower bound for the
discrepancy $D_k$ for $k\ge 3$ or $p=2$. By a general result on the
discrepancy of probability measures \cite[Theorem 1]{Su}, we have
\[D_k\ge \left(\frac{2}{\pi^2}\sum_{n=1}^\infty\frac{\lvert M_k^{(n)}\rvert^2}{N^2n^2}\right)^{1/2}.\]
For $k\ge 3$, we have $R^{k,1}_{p,k-1}\ge k-1$ for $n=k-1$. Thus, by Theorem
\ref{e.m3}, we have
\[\lvert M_k^{(k-1)}\rvert\ge Nq^{\frac{1-k}{2}}(k-1)-(q-1)^kq^{-k/2}[(k-1)^k-1+k!].\]
Therefore, for $q\ge 4$, we have
\[D_k\ge \frac{\sqrt{2}}{\pi}q^{-\frac{k-1}{2}}\left[1-2q^{-1/2}(k-1)^{k-1}(\tfrac{q-1}{q-3})^k\right].\]
For $k=p=2$, we have $R^{2,1}_{2,3}=1$ for $n=3$. Thus, by Theorem
\ref{t.m3}, we have
\[\lvert M_2^{(3)}\rvert\ge (q-2)(q-3)q^{-1/2}-9(q-1)^2q^{-1}.\]
Therefore, for $q=2^f\ge 4$, we have
\[D_2\ge \frac{\sqrt{2}}{3\pi}q^{-1/2}\left[1-9q^{-1/2}(\tfrac{q-1}{q-3})^2\right].\]
\end{remark}

\subsection*{Acknowledgements}
We thank Ming Fang, Ofer Gabber, and Nicholas Katz for useful discussions.
Part of this work was done during visits of the first named and the second
named authors to l'Institut des Hautes \'Etudes Scientifiques. He thanks the
institute for hospitality and support. We thank the referee for helpful
comments.

\begin{bibdiv}
\begin{biblist}
\bib{Deligne}{article}{
   author={Deligne, P.},
   title={Application de la formule des traces aux sommes
   trigonom\'etriques},
   book={
       title={Cohomologie \'etale},
       series={Lecture Notes in Mathematics},
       volume={569},
       note={S\'eminaire de G\'eom\'etrie Alg\'ebrique du Bois-Marie SGA
   4$\frac{1}{2}$},
       publisher={Springer-Verlag},
       place={Berlin},
       review={\MR{0463174 (57 \#3132)}},
       date={1977},
   },
   pages={168--232},
}

\bib{WeilII}{article}{
   author={Deligne, P.},
   title={La conjecture de Weil. II},
   language={French},
   journal={Inst. Hautes \'Etudes Sci. Publ. Math.},
   number={52},
   date={1980},
   pages={137--252},
   issn={0073-8301},
   review={\MR{601520 (83c:14017)}},
}

\bib{ET}{article}{
   author={Erd{\"o}s, P.},
   author={Tur{\'a}n, P.},
   title={On a problem in the theory of uniform distribution. I, II},
   journal={Nederl. Akad. Wetensch., Proc.},
   volume={51},
   date={1948},
   pages={1146--1154, 1262--1269 = Indagationes Math. \textbf{10}, 370--378, 406--413},
   review={\MR{0027895 (10,372c)}, \MR{0027896 (10,372d)}},
}

\bib{GOS}{article}{
   author={Grothendieck, A.},
   title={Formule d'Euler-Poincar\'e en cohomologie \'etale},
   book={
        title={Cohomologie $l$-adique et fonctions $L$},
        language={French},
        series={Lecture Notes in Mathematics},
        volume={589},
        note={S\'eminaire de G\'eometrie Alg\'ebrique du Bois-Marie 1965--1966
   (SGA 5)},
        publisher={Springer-Verlag},
        place={Berlin},
        date={1977},
        isbn={3-540-08248-4},
        review={\MR{0491704 (58 \#10907)}},
   },
   note={Expos\'e X, r\'edig\'e par I. Bucur},
   pages={372--406},
}

\bib{Katz}{book}{
   author={Katz, N. M.},
   title={Gauss sums, Kloosterman sums, and monodromy groups},
   series={Annals of Mathematics Studies},
   volume={116},
   publisher={Princeton University Press},
   place={Princeton, NJ},
   date={1988},
   pages={x+246},
   isbn={0-691-08432-7},
   isbn={0-691-08433-5},
   review={\MR{955052 (91a:11028)}},
}

\bib{KZ}{article}{
   author={Katz, N. M.},
   author={Zheng, Z.},
   title={On the uniform distribution of Gauss sums and Jacobi sums},
   conference={
      title={Analytic number theory, Vol.\ 2},
      address={Allerton Park, IL},
      date={1995},
   },
   book={
      series={Progr. Math.},
      volume={139},
      publisher={Birkh\"auser Boston},
      place={Boston, MA},
      date={1996},
   },
   pages={537--558},
   review={\MR{1409377 (97e:11089)}},
}

\bib{King}{article}{
   author={King, R. C.},
   title={Modification rules and products of irreducible representations of
   the unitary, orthogonal, and symplectic groups},
   journal={J. Mathematical Phys.},
   volume={12},
   date={1971},
   pages={1588--1598},
   issn={0022-2488},
   review={\MR{0287816 (44 \#5019)}},
}

\bib{Littel}{article}{
   author={Littelmann, P.},
   title={A generalization of the Littlewood-Richardson rule},
   journal={J. Algebra},
   volume={130},
   date={1990},
   number={2},
   pages={328--368},
   issn={0021-8693},
   review={\MR{1051307 (91f:22023)}},
   doi={10.1016/0021-8693(90)90086-4},
}

\bib{RT}{article}{
   author={Rivat, J.},
   author={Tenenbaum, G.},
   title={Constantes d'Erd\H os-Tur\'an},
   language={French, with English and French summaries},
   journal={Ramanujan J.},
   volume={9},
   date={2005},
   number={1-2},
   pages={111--121},
   issn={1382-4090},
   review={\MR{2166382 (2006g:11158)}},
   doi={10.1007/s11139-005-0829-1},
}

\bib{Schwarz}{article}{
   author={Schwarz, G. W.},
   title={Invariant theory of $G_2$ and ${\rm Spin}_7$},
   journal={Comment. Math. Helv.},
   volume={63},
   date={1988},
   number={4},
   pages={624--663},
   issn={0010-2571},
   review={\MR{966953 (89k:14080)}},
   doi={10.1007/BF02566782},
}

\bib{Shp}{article}{
   author={Shparlinski, I. E.},
   title={On the distribution of arguments of Gauss sums},
   journal={Kodai Math. J.},
   volume={32},
   date={2009},
   number={1},
   pages={172--177},
   issn={0386-5991},
   review={\MR{2518562 (2010b:11104)}},
   doi={10.2996/kmj/1238594554},
}

\bib{Su}{article}{
   author={Su, Francis Edward},
   title={A LeVeque-type lower bound for discrepancy},
   conference={
      title={Monte Carlo and quasi-Monte Carlo methods 1998 (Claremont, CA)},
   },
   book={
      publisher={Springer, Berlin},
   },
   date={2000},
   pages={448--458},
   review={\MR{1849870 (2002f:11098)}},
}

\bib{Weyl}{book}{
   author={Weyl, Hermann},
   title={The classical groups},
   series={Princeton Landmarks in Mathematics},
   note={Their invariants and representations;
   Fifteenth printing;
   Princeton Paperbacks},
   publisher={Princeton University Press, Princeton, NJ},
   date={1997},
   pages={xiv+320},
   isbn={0-691-05756-7},
   review={\MR{1488158 (98k:01049)}},
}
\end{biblist}
\end{bibdiv}

\end{document}